\newtheorem{theorem}{Theorem}[section]
\newtheorem{lemma}[theorem]{Lemma}
\theoremstyle{definition}
\theoremstyle{remark}
\newtheorem{remark}[theorem]{Remark}
\numberwithin{equation}{section}
\begin{document}

\title[Waring's Problem for Polynomials]{Waring's Problem for Polynomial Rings and the Digit Sum of Exponents}

\author{Seth Dutter}
\address{}
\curraddr{}
\email{}

\author{Cole Love}
\address{}
\curraddr{}
\email{}

\thanks{Research supported in part by the UW--Stout Foundation.}

\date{\today}

\subjclass[2010]{Primary 11P05; Secondary 11T55}

\keywords{Waring's problem, function fields, positive characteristic}

\begin{abstract}
Let $F$ be an algebraically closed field of characteristic $p > 0$. In this paper we develop methods to represent arbitrary elements of $F[t]$ as sums of perfect $k$-th powers for any $k\in\mathbb{N}$ relatively prime to $p$. Using these methods we establish bounds on the necessary number of $k$-th powers in terms of the sum of the digits of $k$ in its base-$p$ expansion. As one particular application we prove that for any fixed prime $p > 2$ and any $\epsilon > 0$ the number of $(p^r-1)$-th powers required is $\mathcal{O}\left(r^{(2+\epsilon)\ln(p)}\right)$ as a function of $r$.
\end{abstract}

\maketitle

\section{Introduction}\label{section.intro}
Waring's problem asks whether there exists some positive integer $s$ such that every natural number can be represented as the sum of at most $s$ $k$-th powers. Vaughan and Wooley \cite{vaughan2002waring} provide a comprehensive survey of the classic version of this problem. Rather than focus on natural numbers, we consider the problem of representing an arbitrary polynomial as a sum of $k$-th powers over a field of positive characteristic. This question and its variants have been extensively studied. Paley \cite{paley1933theorems} was the first to establish upper bounds on the necessary size of $s$ in the case of polynomials over finite fields. More recently, Vaserstein \cite{vaserstein1992ramsey} and Liu and Wooley \cite{liu2007unrestricted} have improved upon these bounds. In the characteristic $0$ case Newman and Slater \cite{newman1979waring} have proven lower and upper bounds on the number of $k$-th powers needed to represent an arbitrary polynomial.

The characteristic of the field plays a crucial role in determining not only the number of $k$-th powers necessary, but whether the problem even has a solution. For example, over a field of characteristic $2$ it is never possible to write the monomial $t$ as a sum of perfect squares due to the Frobenius endomorphism. On the other hand, the upper bounds established in \cite{vaserstein1992ramsey} for the positive characteristic case are often lower than the proven lower bound in \cite{newman1979waring} for the characteristic $0$ case. Even with a fixed positive characteristic, the size of the field plays an important role, as algebraic extensions may be necessary for certain constructions.

Beyond the choice of the underlying field there is also the question of whether any restrictions are placed on the polynomials themselves. A particular representation
\[
f(t) = y_1(t)^k+\cdots+y_s(t)^k
\]
is said to be \emph{strict} if $\deg(y_i^k) < \deg(f)+k$ for all $i\in\{1,\ldots, s\}$. See \cite{liu2010waring} for a thorough treatment of the strict case. Here we will investigate the \emph{unrestricted} variant of Waring's problem wherein no condition is placed upon the degree of the $y_i$. A consequence of studying the unrestricted variant is that it suffices to represent $t$ as a sum of $k$-th powers. Indeed, if
\[
t = y_1(t)^k+\cdots+y_s(t)^k,
\]
then by function composition
\[
f(t) = y_1(f(t))^k+\cdots+y_s(f(t))^k
\]
for any polynomial $f(t)$. As we are only handling the unrestricted variant in this paper, the proofs will focus on representing $t$ as a sum of $k$-th powers.

Throughout this paper we will let $F$ be an algebraically closed field of characteristic $p > 0$. If $q$ is a power of $p$ we will define $\mathbb{F}_q$ to be the finite field with $q$ elements and we will naturally identify $\mathbb{F}_q$ with the subfield of $F$ having $q$ elements. As in \cite{voloch2016planar} we let $v(p,k)$ denote the smallest natural number $s$ such that every polynomial in $F[t]$ can be written as a sum of at most $s$ $k$-th powers. If no such $s$ exists, then we say $v(p,k) = \infty$.

We will frequently refer to the base-$q$ expansion of $k$. When doing so we will write
\[
k = k_1 q^{a_1}+\ldots +k_Nq^{a_N}
\]
where the $a_i$ are a strictly increasing sequence of nonnegative integers and the $k_i$ are the unique integers satisfying $0\leq k_i < q$. The $q^{a_1}$ factor will often be omitted because in all theorems $k$ will be assumed relatively prime to $q$ and therefore $a_1 = 0$. The $k$ studied in this paper will usually be sparse in the sense that most digits are $0$ and for this reason we will only list the nonzero $k_i$ in the expansion.

If $k=  k_1 p^{a_1}+\ldots +k_Np^{a_N}$ is the base-$p$ expansion of $k$, then Theorem 2 of Vaserstein \cite{vaserstein1992ramsey} establishes the bound
\begin{equation}\label{eq:vaserbound}
v(p, k) \leq \prod_{i=1}^N (k_i+1) - 1
\end{equation}
for all $k$ relatively prime to $p$. Our results are directed at improving this bound, under certain conditions, by relating $v(p, k)$ to the sum of the digits in the base-$q$ expansion of $k$ for some $q$. As such, we introduce the notation
\[
\gamma_q(k) = k_1+\cdots+k_N
\]
for this sum of digits. As a final piece of notation, we will need to make use of the indicator function
\[
	\mathbf{1}_M(x) = \left\{
	\begin{array}{ll}
	1	 & \text{if $x = M$}\\
	0	 & \text{if $x \neq M$}
	\end{array}
	\right..
\]

Before introducing our main theorem we note two additional reductions to Waring's problem. Firstly, if $k | k'$, then $v(p, k) \leq v(p, k')$. This follows from the observation that for any sum of $k'$-th powers we have
\[
y_1(t)^{k'}+\cdots+y_s(t)^{k'} = \left(y_1(t)^{k'/k}\right)^k+\cdots+\left(y_s(t)^{k'/k}\right)^k.
\]
Therefore, we will often establish a bound for some $v(p,k')$ from which a bound for $v(p,k)$ follows. This reduction is only possible because we are investigating the unrestricted variant of Waring's problem. Secondly, since we are working over an algebraically closed field, all constants are $k$-th powers. Thus, it suffices to represent $t$ as a linear combination of $k$-th powers of polynomials.

We now state our main theorem, the proof of which is provided in Section \ref{sect:mainthm}.

\begin{theorem}\label{thm:main}
	Let $p$ be prime and $k\in\mathbb{N}$ be relatively prime to $p$. If $M$, $n\in\mathbb{N}$ satisfy the conditions $\gamma_{p^n}(k)\leq M$ and $M-1\mid p^n-1$, then
	\[
	   v(p,k) \leq 2(M-1) + \mathbf{1}_M(\gamma_{p^n}(k)).
	\]
\end{theorem}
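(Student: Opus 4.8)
The plan is to exhibit $t$ as an $F$-linear combination of $k$-th powers; since $F$ is algebraically closed and we are in the unrestricted setting, such a combination --- even one correct only up to an additive constant, as every constant is a $k$-th power --- will suffice. Write $q=p^{n}$, $d=M-1$, and $k=k_{1}+k_{2}q^{a_{2}}+\cdots+k_{N}q^{a_{N}}$ with $0<a_{2}<\cdots<a_{N}$, $0<k_{1}<q$, and $\gamma_{q}(k)=k_{1}+\cdots+k_{N}\le M=d+1$. The hypothesis $d\mid q-1$ provides an element $\zeta\in\mathbb{F}_{q}\subseteq F$ of exact multiplicative order $d$, and since $\gcd(q-1,p)=1$ we also have $d\neq 0$ and $k_{1}\neq 0$ in $F$.

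The first step is to move into base $q$ via the Frobenius identity $(x+y)^{q}=x^{q}+y^{q}$. For $\zeta^{l}\in\mathbb{F}_{q}$ one has $(1+\zeta^{l}t)^{q^{a_{i}}}=1+\zeta^{l}t^{q^{a_{i}}}$, so
\[
(1+\zeta^{l}t)^{k}=\prod_{i=1}^{N}\bigl(1+\zeta^{l}t^{q^{a_{i}}}\bigr)^{k_{i}}=\sum_{\vec{\jmath}}\Bigl(\prod_{i}\binom{k_{i}}{j_{i}}\Bigr)\zeta^{\,l\lvert\vec{\jmath}\rvert}\,t^{\langle\vec{\jmath}\rangle},
\]
the sum over $\vec{\jmath}=(j_{1},\dots,j_{N})$ with $0\le j_{i}\le k_{i}$, where $\lvert\vec{\jmath}\rvert=\sum_{i}j_{i}$ and $\langle\vec{\jmath}\rangle=\sum_{i}j_{i}q^{a_{i}}$; because $0\le j_{i}\le k_{i}<q$ the map $\vec{\jmath}\mapsto\langle\vec{\jmath}\rangle$ is injective. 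Averaging against the characters of $\langle\zeta\rangle$, the polynomials $P_{c}(t):=\frac{1}{d}\sum_{l=0}^{d-1}\zeta^{-cl}(1+\zeta^{l}t)^{k}$ --- each an $F$-linear combination of the \emph{same} $d$ $k$-th powers $(1+\zeta^{l}t)^{k}$ --- equal $\sum_{\vec{\jmath}:\,\lvert\vec{\jmath}\rvert\equiv c\,(d)}(\prod_{i}\binom{k_{i}}{j_{i}})t^{\langle\vec{\jmath}\rangle}$. The crux of this step is that $\lvert\vec{\jmath}\rvert\le\gamma_{q}(k)\le d+1$, so for $c\equiv 1\pmod d$ the only contributing weights are $\lvert\vec{\jmath}\rvert=1$ (forcing $\vec{\jmath}$ to be a standard basis vector, hence $\langle\vec{\jmath}\rangle=q^{a_{i}}$, with coefficient $\binom{k_{i}}{1}=k_{i}$) and, precisely when $\gamma_{q}(k)=M$, the weight $\lvert\vec{\jmath}\rvert=d+1$ (forcing $\vec{\jmath}=(k_{1},\dots,k_{N})$, $\langle\vec{\jmath}\rangle=k$, coefficient $1$). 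Thus $P_{1}(t)=L(t)+\mathbf{1}_{M}(\gamma_{q}(k))\,t^{k}$ where $L(t)=\sum_{i:\,p\nmid k_{i}}k_{i}\,t^{q^{a_{i}}}$ is an additive polynomial with nonzero linear term $k_{1}t$; subtracting the $k$-th power $t^{k}=(t)^{k}$ exhibits $L(t)$ as an $F$-linear combination of $d+\mathbf{1}_{M}(\gamma_{q}(k))$ many $k$-th powers.

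It then remains to recover $t$ from the identity $L(t)=\Phi(t)$ using about $d$ further $k$-th powers, which I expect to be the main obstacle. Since $L$ is additive with a unit linear coefficient one cannot simply invert it with a polynomial, so the idea is to apply the device of the first step a second time to suitable inputs: for a polynomial $g$, the $d$ $k$-th powers $(1+\zeta^{l}g)^{k}$ likewise yield $P_{1}(g)=L(g)+\mathbf{1}_{M}(\gamma_{q}(k))g^{k}$, and one seeks a single $g$ (built from Frobenius twists $t,t^{q},t^{q^{2}},\dots$, of which only $N\le\gamma_{q}(k)\le d+1$ are relevant) making $L(g(t))$ equal to $t$ minus the earlier unwanted terms $\sum_{i\ge 2}k_{i}t^{q^{a_{i}}}$; combining the two stages then cancels everything except $t$, which is the source of the factor $2$ in $2(M-1)$, while $\mathbf{1}_{M}(\gamma_{q}(k))$ records the one extra $k$-th power $t^{k}$ used in the boundary case. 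Carrying this out reduces to solving a structured linear system over $F$ whose coefficient matrix is a variant of a generalized Vandermonde matrix; its determinant is a nonzero polynomial in the free parameters --- its monomials are pairwise distinct with coefficients $\pm1$, and $\pm1\neq 0$ in any characteristic --- so over the algebraically closed field $F$ one may choose the parameters to make it invertible and obtain the solution. Throughout, $\gcd(k,p)=1$ is what guarantees $\binom{k}{1}=k$ and the leading coefficient of $L$ are nonzero, and $M-1\mid p^{n}-1$ is what makes the root-of-unity averaging available and forces the weight count $\lvert\vec{\jmath}\rvert\le d+1$ to interact with the residue class of $1$ in exactly the two ways recorded above.
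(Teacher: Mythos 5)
Your first stage is correct and is essentially the paper's Lemma \ref{lma:rootunity} together with Equation \ref{eq:firstsubst}, rederived via a root-of-unity filter instead of a direct coefficient computation: the polynomial $P_1(t)=\frac{1}{d}\sum_l\zeta^{-l}(1+\zeta^l t)^k$ does indeed equal $\sum_i k_i t^{q^{a_i}}+\mathbf{1}_M(\gamma_q(k))\,t^k$ (up to a constant when $M=2$), and the weight bound $\lvert\vec{\jmath}\rvert\le d+1$ is exactly the right observation. The problem is the second stage, which you correctly flag as the main obstacle but do not actually resolve, and the sketch you offer fails. You want an additive $g(t)=\sum_m c_m t^{q^{b_m}}$ with $L(g(t))=t-\sum_{i\ge 2}k_i t^{q^{a_i}}$. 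A degree count kills this: the top term of $L(g)$ is $k_{N}\,c_{\max}^{q^{a_N}}t^{q^{a_N+b_{\max}}}$, which nothing on the right-hand side can match unless $b_{\max}=0$, so $g$ is forced to be $g(t)=ct$. Then the requirement that the second copy cancel every intermediate term of the first while preserving a linear term reduces to finding $c,\mu$ with $c^{q^{a_i}}=\mu$ for all $i\ge 2$ (with $k_i\neq 0$ in $F$) and $c\neq\mu$. This system is over-determined and genuinely unsolvable in general: for instance if $a_2=2$ and $a_3=4$, then $c^{q^2}=c^{q^4}$ forces $c\in\mathbb{F}_{q^2}$, whence $\mu=c^{q^2}=c$ and the linear term is annihilated along with everything else. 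So the ``generalized Vandermonde determinant'' you invoke is not a nonzero polynomial in free parameters; there are no free parameters left once the degree constraint pins $g$ down.

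The missing idea is the paper's Lemma \ref{lma:pump1}: before running the argument, replace $k$ by a multiple $k'$ (legitimate in the unrestricted problem since $k\mid k'$ gives $v(p,k)\le v(p,k')$) whose base-$q$ expansion has the same digits $k_i$ but with all higher positions $a_i'\equiv 1\pmod u$ for some $u>1$ prime to the order of $q$ in $(\mathbb{Z}/k\mathbb{Z})^\times$. After this normalization, any $\lambda$ generating $\mathbb{F}_{q^u}^\times$ satisfies $\lambda^{q^{a_i'}}=\lambda^{q}$ for every $i\ge 2$ simultaneously, so the single combination $P_1(\lambda t)-\lambda^{q}P_1(t)$ wipes out all intermediate terms at once while leaving the coefficient $k_1(\lambda-\lambda^q)\neq 0$ on $t$; this is what makes two batches of $M-1$ powers suffice and produces the factor $2$ in the bound. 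Without some such divisibility reduction your two-stage scheme cannot close, so as written the proof has a genuine gap at its central step.
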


\begin{remark}\label{rem:car}
If $k \mid p^r+1$ for some natural number $r$, then we can take $M=2$ and $n=1$ in Theorem \ref{thm:main} to get $v(p, k) \leq 3$. This particular case also follows from Equation \ref{eq:vaserbound} and has been additionally studied by Car \cite{car2010sums,car2012sums} and Voloch \cite{voloch2016planar}.
\end{remark}

It should be noted that while we are working over an algebraically closed field all constructions will take place over some finite subfield of effectively computable size. In most instances the size of this subfield will be much larger than those needed for the constructions in  \cite{car2010sums}, \cite{car2012sums}, \cite{vaserstein1992ramsey}, and \cite{voloch2016planar}. However, when applicable in base-$p$, the bound established in Theorem \ref{thm:main} frequently gives an improvement over the bound in Equation \ref{eq:vaserbound}. While only a small proportion of numbers meet the hypothesis in base-$p$, in practice this limitation can often be worked around by using a larger base. In Section \ref{section.applications} we give examples of how this can be done, the results of which now follow.

\begin{theorem}\label{thm:odd}
Let $p$ be an odd prime, $k \in\mathbb{N}$ be relatively prime to $p$, and $p$ have odd order in $(\mathbb{Z}/k\mathbb{Z})^\times$. If $c$ is the least nonnegative integer such that $\gamma_p(k) \leq 2^c+1$, then
\[
    v(p, k) \leq 2^{c+1}+\mathbf{1}_{2^{c}+1}(\gamma_p(k)) < 4\gamma_p(k).
\]
\end{theorem}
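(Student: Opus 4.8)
The plan is to derive Theorem~\ref{thm:odd} from Theorem~\ref{thm:main} by combining the hypothesis on the order of $p$ with the divisibility reduction $v(p,k)\le v(p,k')$ for $k\mid k'$. We will take $M=2^{c}+1$, so that $2(M-1)=2^{c+1}$, and then manufacture a multiple $k'$ of $k$ together with an exponent $n$ for which both hypotheses of Theorem~\ref{thm:main} hold, namely $M-1=2^{c}\mid p^{n}-1$ and $\gamma_{p^{n}}(k')\le M$, and moreover for which $\gamma_{p^{n}}(k')=\gamma_{p}(k)$ exactly; this last point makes the indicator term produced by Theorem~\ref{thm:main} coincide with the one in the statement.

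The construction of $k'$ is digit spreading. If $k=\sum_{i=0}^{L}c_{i}p^{i}$ is the base-$p$ expansion of $k$, set $k'=\sum_{i=0}^{L}c_{i}p^{ni}$. Since each $c_{i}$ lies in $[0,p-1]\subseteq[0,p^{n}-1]$, these $c_{i}$ are exactly the base-$p^{n}$ digits of $k'$, so $\gamma_{p^{n}}(k')=\sum_{i}c_{i}=\gamma_{p}(k)$, and $k'$ is prime to $p$ because its base-$p$ units digit is $c_{0}=k\bmod p\ne 0$. The role of the order hypothesis is to force $k\mid k'$: let $d$ be the (odd) order of $p$ in $(\mathbb{Z}/k\mathbb{Z})^{\times}$; if $n\equiv 1\pmod d$ then $p^{n}\equiv p\pmod k$, hence $p^{ni}\equiv p^{i}\pmod k$ for every $i$, and therefore $k'\equiv\sum_{i}c_{i}p^{i}=k\equiv 0\pmod k$.

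It remains to choose $n\ge 1$ with $n\equiv 1\pmod d$ and $2^{c}\mid p^{n}-1$. Write $e=\mathrm{ord}_{2^{c}}(p)$ for $c\ge 1$ and $e=1$ for $c=0$; then $2^{c}\mid p^{n}-1$ precisely when $e\mid n$. As $e$ divides $\varphi(2^{c})=2^{c-1}$ it is a power of $2$, and since $d$ is odd we get $\gcd(d,e)=1$, so the Chinese Remainder Theorem furnishes a positive integer $n$ with $n\equiv 1\pmod d$ and $n\equiv 0\pmod e$. Applying Theorem~\ref{thm:main} to $k'$ with this $M$ and $n$ then yields
\[
v(p,k)\le v(p,k')\le 2(M-1)+\mathbf{1}_{M}\bigl(\gamma_{p^{n}}(k')\bigr)=2^{c+1}+\mathbf{1}_{2^{c}+1}\bigl(\gamma_{p}(k)\bigr).
\]
For the remaining strict inequality, minimality of $c$ gives $\gamma_{p}(k)\ge 2^{c-1}+2$ when $c\ge 1$ (and $\gamma_{p}(k)\ge 2$ when $c=0$ and $k>1$, the case $k=1$ being immediate), whence $4\gamma_{p}(k)\ge 2^{c+1}+8>2^{c+1}+1\ge 2^{c+1}+\mathbf{1}_{2^{c}+1}(\gamma_{p}(k))$.

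The substantive steps are the digit-spreading construction and the observation that the odd order of $p$ modulo $k$ is automatically coprime to the $2$-power order of $p$ modulo $2^{c}$; I expect the care to go into checking that $k$ really divides $k'$ while $k'$ stays prime to $p$ and the digit sum is preserved exactly, since once $k'$ and $n$ are in hand the conclusion is a single application of Theorem~\ref{thm:main}.
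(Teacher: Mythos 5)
Your proof is correct and follows essentially the same route as the paper: both reduce to Theorem \ref{thm:main} with $M=2^{c}+1$ by digit-spreading $k$ into a multiple $k'$ with the same digit sum in a larger base, using the oddness of the order of $p$ modulo $k$ to make the relevant moduli coprime. The only difference is bookkeeping --- the paper fixes the base exponent $n=2^{c}$ (where $2^{c}\mid p^{2^{c}}-1$ holds automatically for odd $p$) and shifts the digit positions via Lemma \ref{lma:pump2}, whereas you keep the digit positions and choose $n$ by the Chinese Remainder Theorem.
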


\begin{remark}
If $p$ and $\ell$ are distinct primes with $p$ odd, then either $p$ has odd order in $(\mathbb{Z}/\ell\mathbb{Z})^\times$ or there exists some natural number $r$ such that $p^r\equiv -1\pmod{\ell}$. In the latter case, by Remark \ref{rem:car}, we have $v(p,\ell) \leq 3$. In the former case, by Theorem \ref{thm:odd}, we have $v(p, \ell) < 4\gamma_p(\ell)$. In either case, the bound is small when the exponent is prime.
\end{remark}

In the event that $p$ does not have odd order, or that $p=2$, bounds still follow from Theorem \ref{thm:main}, but they are no longer linear in $\gamma_p(k)$.

\begin{theorem}\label{thm:order}
Let $p$ be prime, $k\in\mathbb{N}$ be relatively prime to $p$, and $r$ the order of $p$ in $(\mathbb{Z}/k\mathbb{Z})^\times$. If $u\in\mathbb{N}$ is relatively prime to $r$ and $\gamma_p(k) \leq p^u$, then
\begin{equation}\label{eq:order}
    v(p,k) \leq 2(p^u-1) + \mathbf{1}_M(\gamma_p(k)).
\end{equation}
\end{theorem}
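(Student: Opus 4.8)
The plan is to reduce Theorem~\ref{thm:order} to Theorem~\ref{thm:main} by passing to a suitable multiple of $k$. Fix $M = p^u$ and $n = u$; then the divisibility hypothesis $M-1 \mid p^n-1$ of Theorem~\ref{thm:main} is the trivial statement $p^u-1 \mid p^u-1$. Using the reduction $v(p,k)\le v(p,k')$ valid whenever $k\mid k'$, it therefore suffices to exhibit a multiple $k'$ of $k$ that is relatively prime to $p$ and satisfies $\gamma_{p^u}(k') = \gamma_p(k)$: for such a $k'$ we have $\gamma_{p^u}(k') = \gamma_p(k)\le p^u = M$, so Theorem~\ref{thm:main} applies to $k'$ and yields
\[
v(p,k)\le v(p,k') \le 2(p^u-1) + \mathbf{1}_{p^u}\!\left(\gamma_{p^u}(k')\right) = 2(p^u-1) + \mathbf{1}_{p^u}\!\left(\gamma_p(k)\right),
\]
which is the assertion (with $M = p^u$ understood in the indicator).

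To construct $k'$, write the base-$p$ expansion $k = \sum_{i\in S} c_i p^i$, where $S$ is the finite set of exponents with $0 < c_i < p$; note $0\in S$ because $\gcd(k,p)=1$, and $\sum_{i\in S} c_i = \gamma_p(k)$. Since $p$ has order $r$ in $(\mathbb{Z}/k\mathbb{Z})^\times$ we have $p^r\equiv 1\pmod k$, and since $\gcd(u,r)=1$ we may fix an inverse $u^{-1}$ of $u$ modulo $r$. Define $\sigma\colon S\to\mathbb{Z}_{\ge 0}$ by $\sigma(i) = (u^{-1}i\bmod r) + ri$. This map is injective (the summand $ri$ dominates the bounded summand $u^{-1}i\bmod r<r$), it satisfies $\sigma(0)=0$, and it satisfies $u\sigma(i)\equiv i\pmod r$. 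Set
\[
k' = \sum_{i\in S} c_i\, p^{\,u\sigma(i)}.
\]

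It remains to check three properties of $k'$. First, $k\mid k'$: since $p^r\equiv 1\pmod k$ and $u\sigma(i)\equiv i\pmod r$, each $p^{u\sigma(i)}\equiv p^i\pmod k$, whence $k'\equiv \sum_{i\in S}c_i p^i = k\equiv 0\pmod k$. Second, $\gcd(k',p)=1$: the exponents $u\sigma(i)$ are pairwise distinct nonnegative integers (as $\sigma$ is injective and $u\ge 1$) and the smallest of them is $u\sigma(0)=0$, so the base-$p$ expansion of $k'$ carries the nonzero digit $c_0$ in position $0$. Third, $\gamma_{p^u}(k') = \gamma_p(k)$: the nonzero base-$p$ digits of $k'$ are precisely the $c_i$, placed in the distinct positions $u\sigma(i)$, and each such position is a multiple of $u$; hence, when base-$p$ digits are grouped into blocks of length $u$ to form the base-$p^u$ expansion, each $c_i$ contributes $c_i\cdot p^{\,u\sigma(i)\bmod u} = c_i$, giving $\gamma_{p^u}(k') = \sum_{i\in S}c_i = \gamma_p(k)$.

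The only step that requires real care is the choice of $\sigma$: it must spread the nonzero base-$p$ digits of $k$ onto positions that are multiples of $u$, so that $\gamma_{p^u}(k')$ does not grow, while keeping each new position congruent modulo $r$ to the original one, so that divisibility by $k$ is preserved. The simultaneous solvability of these two requirements is precisely the content of $\gcd(u,r)=1$, and that is the only place this hypothesis enters; everything else is bookkeeping with base-$p$ expansions and the divisibility reduction for $v(p,\cdot)$.
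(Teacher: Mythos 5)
Your proposal is correct and follows essentially the same route as the paper: set $M = p^u$, $n = u$ in Theorem~\ref{thm:main} and pass to a multiple $k'$ of $k$ with $\gamma_{p^u}(k') = \gamma_p(k)$, which is exactly the content of the paper's Lemma~\ref{lma:pump2}. Your explicit construction of $k'$ via $\sigma(i) = (u^{-1}i \bmod r) + ri$ is just an inline re-derivation of that lemma with an equivalent exponent map (the paper instead multiplies each exponent by $rb+1$ where $rb \equiv -1 \pmod{u}$), and both use $\gcd(u,r)=1$ in the same essential way.
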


\begin{remark}
In the notation of the previous theorem, if $\gamma_p(k) > p$ and $\ell$ is the least prime not dividing $r$, then
\[
v(p, k) < 2\gamma_p(k)^\ell-1.
\]
Indeed, there must be some nonnegative integer $c$ such that $p^{\ell^c} < \gamma_p(k) \leq p^{\ell^{c+1}}$, so $p^{\ell^{c+1}} < \gamma_p(k)^\ell$. Let $u = \ell^{c+1}$ and the result follows by Equation \ref{eq:order}.
\end{remark}

A consequence of the bound in Equation \ref{eq:vaserbound} is that $v(p, k) < k$ for all $k\in\mathbb{N}$ satisfying the following conditions: $k$ is relatively prime to $p$, $k > p$, and $k$ is not of the form $p^r-1$ for any $r\in\mathbb{N}$. In the case that $k=p^r-1$ the established bound is $v(p, p^r-1) \leq p^r-1$. If $p$ and $r$ are odd, Theorem \ref{thm:odd} gives the generally sharper bound of $v(p, p^r-1) < 4(p-1)r$. However, this theorem does not apply if either $p$ or $r$ are even. In the following theorem we establish that $v(p,p^r-1)$ is bounded by a polynomial in $r$ of degree determined by $p$. The proof of this result appears in Section \ref{section.applications} and relies on bounds for the first Chebyshev function.

\begin{theorem}\label{thm:asymptotic}
Let $p$ be a fixed prime and $\epsilon > 0$ be a real number. Then
    \[
        v(p, p^r-1) = \left\{
            \begin{array}{ll}
                \mathcal{O}\left(r^{(2+\epsilon)\ln(p)}\right) & \text{if $p > 2$}\\
                \mathcal{O}\left(r^{2+\epsilon}\right) & \text{if $p=2$}
            \end{array}
        \right.
    \]
as functions of $r$.
\end{theorem}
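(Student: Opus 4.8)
The plan is to deduce Theorem~\ref{thm:asymptotic} from Theorem~\ref{thm:order}. Two facts make the exponent $k=p^r-1$ tractable: the multiplicative order of $p$ modulo $p^r-1$ is exactly $r$, and the base-$p$ expansion of $p^r-1$ is a string of $r$ digits each equal to $p-1$, so that $\gamma_p(p^r-1)=r(p-1)$. Consequently, if $u\in\mathbb{N}$ is relatively prime to $r$ and $p^u\ge r(p-1)$, then Theorem~\ref{thm:order} gives
\[
v(p,p^r-1)\le 2(p^u-1)+\mathbf{1}_M\bigl(\gamma_p(p^r-1)\bigr)\le 2p^u-1 .
\]
So everything reduces to producing, for every large $r$, an integer $u$ that is coprime to $r$, satisfies $u\ge L:=\log_p\bigl(r(p-1)\bigr)$, and is not much larger than $L$. (The cruder choice of $u$ as a prime power, as in the remark following Theorem~\ref{thm:order}, only yields a quasi-polynomial bound, so a more careful choice is needed.)

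The choice I would make is to take $u$ to be the least prime with $u\ge L$ and $u\nmid r$; such a prime is automatically coprime to $r$. The heart of the matter is the estimate $u\le\lambda L$ for any fixed constant $\lambda>1+\ln p$, once $r$ is large enough. To prove it, suppose instead that every prime in the interval $(L,\lambda L]$ divides $r$. These primes are distinct, so their product divides $r$, and hence $\ln r\ge\vartheta(\lambda L)-\vartheta(L)$, where $\vartheta$ is the first Chebyshev function. By the prime number theorem, $\vartheta(\lambda L)-\vartheta(L)=(\lambda-1+o(1))L$, which for $r$ large exceeds $(\ln p)\,L$ because $\lambda>1+\ln p$. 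On the other hand $r(p-1)\ge r$, so $L\ge(\ln r)/\ln p$ and therefore $(\ln p)\,L\ge\ln r$; combining the two inequalities gives the contradiction $\ln r>\ln r$. Hence some prime in $(L,\lambda L]$ does not divide $r$, and this is the desired $u$.

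Assembling the two steps, for fixed $p$ we get $v(p,p^r-1)\le 2p^u-1\le 2p^{\lambda L}=2\bigl(r(p-1)\bigr)^{\lambda}=\mathcal{O}\bigl(r^{\lambda}\bigr)$. When $p\ge 3$ one has $1+\ln p<2\ln p$, so $\lambda$ can be chosen with $1+\ln p<\lambda\le(2+\epsilon)\ln p$, yielding $v(p,p^r-1)=\mathcal{O}\bigl(r^{(2+\epsilon)\ln p}\bigr)$; when $p=2$ one has $1+\ln 2<2$, so $\lambda$ can be taken below $2+\epsilon$, yielding $\mathcal{O}\bigl(r^{2+\epsilon}\bigr)$. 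The finitely many values of $r$ too small for the asymptotic form of $\vartheta$ to apply change only the implied constant. I expect the main obstacle to be pinning down the constant $\lambda$ through sufficiently sharp lower bounds for $\vartheta$: the argument is genuinely tight exactly when $r$ is divisible by a long initial segment of the primes, so that every integer near $L$ that is coprime to $r$ is forced to be prime, and that is precisely where estimates for the first Chebyshev function are indispensable. A secondary point to verify, relevant when $p=2$ (so $\lambda<2$), is that $(L,\lambda L]$ contains a prime at all, which again follows from the prime number theorem for $L$ large.
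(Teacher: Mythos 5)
Your proof is correct and follows essentially the same route as the paper: both use Chebyshev-function estimates to locate a prime in an interval of length comparable to $\log r$ that does not divide $r$, and then pump the base (your appeal to Theorem \ref{thm:order} is exactly the paper's combination of Lemma \ref{lma:pump2} with Theorem \ref{thm:main}). Your choice of interval $(L,\lambda L]$ with $L=\log_p(r(p-1))$ and $\lambda$ just above $1+\ln p$ is a slightly more economical optimization than the paper's $(\ln r,(2+\epsilon)\ln r]$ and in fact yields the marginally sharper bound $\mathcal{O}\left(r^{1+\ln p+\epsilon}\right)$ for every fixed $p$, which implies the stated theorem.
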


\section{The Main Theorem}\label{sect:mainthm}

The structure of the proof of Theorem \ref{thm:main} is as follows. We begin with Lemma \ref{lma:pump1} by proving the existence of some $k'$ having properties which are necessary for subsequent constructions and such that $k | k'$. Without a loss of generality we will therefore assume that $k$ meets the conditions of the conclusion of Lemma \ref{lma:pump1}. In Lemma \ref{lma:rootunity} we establish a multivariate polynomial identity over $F$ which, after a suitable sequence of substitutions, gives a univariate identity with a degree $1$ polynomial, some terms of degree $k$, and some terms of intermediate degrees. The properties assumed on $k$ allow for a linear combination of two of these identities to eliminate all terms of intermediate degrees.

\begin{lemma}\label{lma:pump1}
Let $q$ and $k$ be relatively prime natural numbers and
\[
    k = k_1 + k_2 q^{a_2}+\cdots+k_N q^{a_N}
\]
be the base-$q$ expansion of $k$. Then there exists $k'$, $u \in\mathbb{N}$ satisfying the following conditions: $u > 1$, $k | k'$, and the base-$q$ expansion of $k'$ is given by
\[
    k' = k_1 + k_2 q^{a_2'}+\cdots+k_N q^{a_N'},
\]
where $a_i' \equiv 1 \pmod{u}$ for all $i\in\{2, \ldots, N\}$.
\end{lemma}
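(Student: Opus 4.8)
The plan is to build $k'$ from $k$ by "spreading out" the positions of the nonzero base-$q$ digits so that every one of them (past the first) lands in a position congruent to $1$ modulo some modulus $u$, while preserving divisibility by $k$. The natural lever is the following: if $k = k_1 + k_2 q^{a_2} + \cdots + k_N q^{a_N}$ and we replace each exponent $a_i$ by $a_i + m_i(q-1)$ for nonnegative integers $m_i$, then since $q \equiv 1 \pmod{q-1}$ each summand $k_i q^{a_i}$ is unchanged modulo $q-1$, hence... but that only controls things mod $q-1$, not mod $k$. So instead I would work modulo the multiplicative order of $q$: let $d$ be the multiplicative order of $q$ in $(\mathbb{Z}/k\mathbb{Z})^\times$ (which exists since $\gcd(q,k)=1$). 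Then $q^{a_i + m_i d} \equiv q^{a_i} \pmod{k}$ for every $m_i$, so
\[
    k' := k_1 + k_2 q^{a_2 + m_2 d} + \cdots + k_N q^{a_N + m_N d} \equiv k \equiv 0 \pmod{k}
\]
for any choice of the $m_i$, giving $k \mid k'$ automatically. The remaining freedom in the $m_i$ must be used to arrange $a_i' := a_i + m_i d \equiv 1 \pmod{u}$ for a suitable $u$.

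The key step is therefore to choose $u$ so that this congruence is simultaneously solvable for all $i$. I would take $u$ to be coprime to $d$ — for instance, pick any prime $\ell$ not dividing $d$, and note we want $u>1$, so $u=\ell$ works — and then observe that since $\gcd(d,u)=1$, multiplication by $d$ is invertible modulo $u$. Hence for each $i \in \{2,\ldots,N\}$ there is a unique residue class for $m_i$ modulo $u$ making $a_i + m_i d \equiv 1 \pmod u$; choosing a sufficiently large representative in that class (and taking the representatives increasing in $i$) also guarantees that the resulting exponents $a_2' < a_3' < \cdots < a_N'$ are strictly increasing and all exceed $0 = a_1$, so the displayed expression really is the base-$q$ expansion of $k'$ (the digits $k_i$ are unchanged and still satisfy $0 \le k_i < q$). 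One should double-check the edge case $N=1$, where the conclusion is vacuous and one can take $k' = k$ with any $u > 1$ coprime to nothing, e.g. $u=2$.

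The main obstacle is a bookkeeping one rather than a conceptual one: verifying that the chosen exponents genuinely form a valid, strictly increasing base-$q$ expansion with the original digits intact, and confirming $u > 1$ can always be met (any prime not dividing $d$ works, and such a prime exists). There is also a subtlety if one wanted $u$ coprime to $d$ for reasons beyond this lemma — but re-reading the statement, only $u>1$ and the congruence condition are required here, so choosing $u=\ell$ for the least prime $\ell \nmid d$ is clean. I expect the whole argument to be short: the divisibility is free from working modulo $\operatorname{ord}_k(q)$, and the congruence condition is a one-line CRT/invertibility observation once $u$ is chosen coprime to $d$.
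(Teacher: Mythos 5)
Your proposal is correct and follows essentially the same route as the paper: both work modulo the multiplicative order $r$ of $q$ in $(\mathbb{Z}/k\mathbb{Z})^\times$ to shift each exponent by a multiple of $r$ (preserving $k\mid k'$), choose $u>1$ coprime to $r$ so that the congruence $a_i + m_i r \equiv 1 \pmod{u}$ is solvable, and pick increasing representatives to keep a valid base-$q$ expansion with the same digits. No substantive differences.
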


\begin{proof}
Denote by $r$ the order of $q$ in $(\mathbb{Z}/k\mathbb{Z})^\times$ and let $u > 1$ be relatively prime to $r$. For each $i\in\{2,\ldots,N\}$ there are infinitely many integers $b_i$ satisfying ${r b_i + a_i \equiv 1 \pmod{u}}$. Select the $b_i$ such that the sequence $a_i' = r b_i + a_i$ is positive and strictly increasing. Let $k'$ be given as in the statement of the lemma in terms of these $a_i'$. Since $q^r \equiv 1 \pmod{k}$ we have $q^{a_i'} \equiv q^{rb_i}q^{a_i} \equiv q^{a_i}\pmod{k}$. From which it follows that $k'\equiv k\pmod{k}$ and therefore $k|k'$.
\end{proof}

\begin{remark}\label{rem:pumping} Note that by construction we have $\gamma_q(k) = \gamma_q(k')$. In particular, $k'$ can be used in place of $k$ without changing the hypothesis or bound of Theorem \ref{thm:main}.
\end{remark}

The following lemma makes no assumption on the characteristic of $F$. Although in general we assume that $F$ is algebraically closed and of positive characteristic, Lemma \ref{lma:rootunity} only requires the existence of a suitable number of roots of unity.

\begin{lemma}\label{lma:rootunity}
Let $M\geq 2$ be an integer and $\omega \in F$ be a primitive $(M-1)$-th root of unity. Let $k_1,\ldots,k_N$ be natural numbers such that $\sum_{i=1}^N k_{i} = M$. Then the following identity holds in the polynomial ring $F[x_1,\ldots, x_N]$.
\begin{equation}\label{eq:multident}
\sum_{j=1}^{M-1} \prod_{i=1}^N(\omega^{j}+x_{i})^{k_i}=
    (M-1)\prod_{i=1}^{N}{x_{i}^{k_i}} +
    (M-1)\sum_{i=1}^{N}{k_ix_i}+\mathbf{1}_{M}(2).
\end{equation}
\end{lemma}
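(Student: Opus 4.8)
The plan is to expand the product $\prod_{i=1}^N(\omega^j + x_i)^{k_i}$ using the binomial theorem in each factor, then sum over $j \in \{1,\ldots,M-1\}$ and exploit the fact that $\sum_{j=1}^{M-1}\omega^{jm}$ is $M-1$ when $(M-1)\mid m$ and $0$ otherwise. Concretely, expanding gives
\[
\prod_{i=1}^N(\omega^j+x_i)^{k_i} = \sum_{0\le e_i\le k_i}\left(\prod_{i=1}^N\binom{k_i}{e_i}x_i^{e_i}\right)\omega^{j\left(\sum_i(k_i-e_i)\right)},
\]
so after summing over $j$ the only surviving monomials $\prod_i x_i^{e_i}$ are those for which $\sum_i(k_i-e_i)\equiv 0\pmod{M-1}$, and each such term picks up a factor of $M-1$.

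Next I would identify exactly which exponent vectors $(e_1,\ldots,e_N)$ with $0\le e_i\le k_i$ satisfy $\sum_i(k_i-e_i)\equiv 0 \pmod{M-1}$. Writing $d_i = k_i - e_i \in \{0,\ldots,k_i\}$, we need $\sum_i d_i \equiv 0\pmod{M-1}$ with $0 \le \sum_i d_i \le \sum_i k_i = M$. Hence $\sum_i d_i \in\{0, M-1\}$ (the value $M$ would require $\sum d_i = M$, which is $\equiv 1 \not\equiv 0$ since $M-1\ge 1$; one must handle $M-1=1$, i.e.\ $M=2$, separately — there every vector qualifies, which is why the $\mathbf 1_M(2)$ correction term appears). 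The case $\sum_i d_i = 0$ forces all $d_i=0$, i.e.\ $e_i = k_i$, contributing $(M-1)\prod_i x_i^{k_i}$. The case $\sum_i d_i = M-1$ means we omit exactly one factor of a single $x_i$ somewhere, i.e.\ $d_i$ ranges over compositions of $M-1$; but since $\sum k_i = M$, a composition of $M-1$ into parts $d_i \le k_i$ is obtained by decreasing exactly one $d$-coordinate by $1$ from the full vector $(k_1,\ldots,k_N)$ — wait, more carefully, $\sum d_i = M-1 = (\sum k_i) - 1$ means $\sum(k_i - d_i) = 1$, so exactly one $e_i = 1$ and the rest are $0$. That contributes $\sum_{i=1}^N \binom{k_i}{1} x_i = \sum_i k_i x_i$, each with the overall factor $M-1$.

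Assembling these three contributions yields exactly the right-hand side of \eqref{eq:multident}. The only genuinely delicate point is the boundary analysis modulo $M-1$: one must check that for $M \ge 3$ the intermediate sums $\sum d_i \in \{1,\ldots,M-2\}\cup\{M\}$ never vanish mod $M-1$, so that no spurious monomials of intermediate degree appear, and separately verify the $M=2$ case by hand (where $M-1=1$ divides everything, the product is $\prod_i(\omega^j+x_i)^{k_i}$ with $\omega = 1$ and a single value $j=1$, giving $\prod_i(1+x_i)^{k_i}$, whose expansion must be shown to equal $\prod_i x_i^{k_i} + \sum_i k_i x_i + 1$; this uses $\sum k_i = 2$, so $k$ is either a single $2$ or two $1$'s, both of which are immediate). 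I expect the bookkeeping of which binomial coefficients survive — i.e., the translation between "$\sum(k_i-e_i)\equiv 0$" and "drop at most one linear factor" — to be the main thing to get right, but it is essentially elementary once the modular constraint is pinned down.
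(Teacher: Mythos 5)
Your proposal is correct and follows essentially the same route as the paper's proof: expand each factor binomially, use the vanishing of $\sum_{j=1}^{M-1}(\omega^j)^m$ unless $(M-1)\mid m$, and observe that the only surviving exponent classes are total degree $M$ (the top term), total degree $1$ (the linear terms), and the constant term when $M=2$. The paper organizes this by the monomial degree $D=\sum_i d_i$ rather than your complementary exponents $k_i-e_i$, but the bookkeeping is identical.
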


\begin{proof}
Observe that the coefficient of $\prod_{i=1}^N x_{i}^{d_i}$ in the left hand side of Equation \ref{eq:multident} is
	\[
		\left(\prod_{i=1}^{N}\binom{k_i}{d_i}\right)\sum_{j=1}^{M-1}(\omega^j)^{M-D},
	\]
where $D = \sum_{i=1}^N d_i$. We consider three cases in establishing the value of these coefficients.

\begin{enumerate}
\item[Case 1.] Suppose that $D = M$. Then $d_i = k_i$ for all $i$. The monomial is $\prod_{i=1}^N x_i^{k_i}$ and the coefficient is
    \[
		\left(\prod_{i=1}^{N}{\binom{k_i}{k_i}}\right)\sum_{j=1}^{M-1}(\omega^j)^0 = M-1.
    \]

\item[Case 2.] Suppose that $D=1$. Then exactly one $d_i$ is equal to $1$ and the rest are $0$. In which case, the monomial is $x_i$ and the coefficient is
	\[
		k_i\sum_{j=1}^{M-1}(\omega^j)^{M-1} = k_i(M-1).
	\]

\item[Case 3.] Suppose that $D \notin\{1, M\}$. If $M = 2$, then $D=0$ and $\omega = 1$. This corresponds to the constant term in the expansion of
\[
    \prod_{i=1}^N(1+x_{i})^{k_i},
\]
which is $1$.

On the other hand, if $M > 2$, then $M-1\nmid M-D$. Since $\omega$ is a primitive $(M-1)$-th root of unity it follows that $\omega^{M-D} \neq 1$. In particular,
    \[
        \sum_{j=1}^{M-1} (\omega^j)^{M-D} = \frac{\left(\omega^{M-D}\right)^{M-1}-1}{\left(\omega^{M-D}\right)-1} = 0.
    \]

Therefore, the contribution of these terms is $1$ if $M=2$ and $0$ otherwise.
\end{enumerate}

Combining these three cases gives the desired identity.
\end{proof}

We will assume without further mention that $k$ meets the conditions of the conclusion of Lemma \ref{lma:rootunity}. See Remark \ref{rem:pumping} for details.

\begin{proof}[Proof of Theorem \ref{thm:main}]
Let $q = p^n$ be as in the hypothesis and $\omega\in F$  be a primitive $(M-1)$-th root of unity. Let $k = k_1+k_2q^{a_2}+\cdots+k_N q^{a_N}$ be the base-$q$ expansion of $k$. The first step is to replace all of the multivariate products in Equation \ref{eq:multident} with univariate polynomials raised to the $k$-th power. Since $M-1 | q-1$ and $q-1 | q^{a_i}-1$, it follows that $(\omega^j)^{q^{a_i}-1} = 1$, so $(\omega^j)^{q^{a_i}} = \omega^j$ for all integers $j$ and all $i\in\{2, \ldots, N\}$. Therefore, working in $F[t]$, we have
\begin{equation}\label{eq:frob}
\prod_{i=1}^N\left(\omega^j+t^{q^{a_i}}\right)^{k_i} = \prod_{i=1}^N\left((\omega^j)^{q^{a_i}}+t^{q^{a_i}}\right)^{k_i}=
\left(\omega^j+t\right)^k,
\end{equation}
where the rightmost equality follows from repeated application of the Frobenius endomorphism.

Suppose that $\gamma_q(k) = M$, so that the hypothesis of Lemma \ref{lma:rootunity} is met. Substitute $x_i = t^{q^{a_i}}$ for $i\in\{2, \ldots, N\}$ and $x_1 = t$ into Equation \ref{eq:multident} and let the $k_i$ come from the base-$q$ expansion of $k$. After applying Equation \ref{eq:frob} and separating out the linear term, we get
\begin{equation}\label{eq:firstsubst}
\sum_{j=1}^{M-1} (\omega^j+t)^k = (M-1)t^k+(M-1)k_1t+(M-1)\sum_{i=2}^N k_i t^{q^{a_i}}+\mathbf{1}_M(2).
\end{equation}
It suffices to eliminate the last summation. Recall that $u$ is such that $a_i \equiv 1 \pmod{u}$ for $i\in\{2,\ldots,N\}$. Let $\lambda$ be a generator of the group $\mathbb{F}_{q^u}^\times\subset F$. For each $i\in\{2,\ldots, N\}$ write $a_i = g_i u + 1$ for some nonnegative integer $g_i$. Observe that
$q^{a_i}-q = q(q^{g_i u}-1)$ from which it follows $q^u-1 \mid q^{a_i}-q$. Since $\lambda$ has order $q^u-1$ we have $\lambda^{q^{a_i}-q} = 1$ and therefore $\lambda^{q^{a_i}} = \lambda^q$. In order to obtain another equation to work with we replace $t$ with $\lambda t$ in Equation \ref{eq:firstsubst} to get
\begin{multline}\label{eq:secondsubst}
\sum_{j=1}^{M-1} (\omega^j+\lambda t)^k = (M-1)\lambda^kt^k+(M-1)k_1\lambda t\\+(M-1)\lambda^q\sum_{i=2}^Nk_i t^{q^{a_i}}+\mathbf{1}_M(2).
\end{multline}
Next we subtract a $\lambda^q$-multiple of Equation \ref{eq:firstsubst} from Equation \ref{eq:secondsubst} to eliminate all terms of intermediate degree.
\begin{multline*}
\sum_{j=1}^{M-1} (\omega^j+\lambda t)^k - \sum_{j=1}^{M-1} \lambda^q(\omega^j+t)^k = (M-1)(\lambda^k-\lambda^q)t^k\\+(M-1)k_1(\lambda-\lambda^q)t+(1-\lambda^q)\mathbf{1}_M(2).
\end{multline*}
Since $M-1 \mid q-1$ it follows that $p \nmid M-1$ and therefore $M-1\neq 0$ in $F$. Similarly, because $\lambda$ has order $q^u-1$, with $u > 1$, we have $\lambda-\lambda^q \neq 0$. Finally, because $k$ is relatively prime to $p$, $k_1 \neq 0$, so the coefficient of $t$ is nonzero. After rearranging terms we get a degree one polynomial in $t$ equal to a sum of $2(M-1)+1$ perfect $k$-th powers. Therefore, $v(p,k) \leq 2(M-1)+1$ when $\gamma_q(k) = M$.

Suppose now that $\gamma_q(k) < M$ and let $R = M - \gamma_q(k)$. We can apply Lemma \ref{lma:rootunity} with exponents $k_1,k_2,\ldots,k_N, R$ to get
\begin{multline*}
\sum_{j=1}^{M-1} (\omega^j+x_{N+1})^R\prod_{i=1}^N(\omega^{j}+x_{i})^{k_i}=
    (M-1)x_{N+1}^R\prod_{i=1}^{N}{x_{i}^{k_i}}\\
    +(M-1)Rx_{N+1}+(M-1)\sum_{i=1}^{N}{k_ix_i}+\mathbf{1}_{M}(2)
\end{multline*}
which, after setting $x_{N+1} = 0$, simplifies to
\[
\sum_{j=1}^{M-1} \omega^{jR}\prod_{i=1}^N(\omega^{j}+x_{i})^{k_i}=(M-1)\sum_{i=1}^{N}k_i x_i+\mathbf{1}_{M}(2).
\]
From here the proof proceeds in the same way as the $\gamma_q(k) = M$ case but without the $t^k$ term. Therefore, we get one fewer $k$-th power in the final equation, so $v(p,k) \leq 2(M-1)$ when $\gamma_q(k) < M$.
\end{proof}

It should be noted that Equation \ref{eq:firstsubst} can also be established as a consequence of Lucas' theorem. To do so requires an analysis of the base-$q$ digits of numbers congruent to $1$ modulo $M-1$ and the breakdown of these digits into their base-$p$ expansion. The authors have instead opted for the proof given above as it is self-contained and provides their original insight into the problem.

\section{Applications of the Main Theorem}\label{section.applications}
The primary weakness of Theorem \ref{thm:main} is the requirement that the sum of the digits be at most the base we are working over.  The following lemma allows us to systematically increase the base to get some $k'$, divisible by $k$, without changing the sum of the digits. This pumping of $k$ is the main tool used to prove the remaining theorems.

\begin{lemma}\label{lma:pump2}
Let $q$, $k \in\mathbb{N}$ be relatively prime and $q$ have order $r$ in $(\mathbb{Z}/k\mathbb{Z})^\times$. For any $u\in\mathbb{N}$, relatively prime to $r$, there exists some natural number $k'$ such that $k\mid k'$ and $\gamma_{q^u}(k^\prime) = \gamma_q(k)$.
\end{lemma}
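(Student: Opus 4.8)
The plan is to mimic the argument of Lemma \ref{lma:pump1}, which already does essentially this construction but phrased in terms of making the base-$q$ exponents congruent to $1$ modulo $u$. Here I want to repackage the same idea so that the conclusion is about the digit sum in base $q^u$ rather than about residues of exponents. First I would write down the base-$q$ expansion
\[
k = k_1 + k_2 q^{a_2} + \cdots + k_N q^{a_N},
\]
and apply Lemma \ref{lma:pump1} with this $q$ and $u$ to obtain $k'$ with $k \mid k'$ and base-$q$ expansion
\[
k' = k_1 + k_2 q^{a_2'} + \cdots + k_N q^{a_N'}
\]
where $a_i' \equiv 1 \pmod{u}$ for $i \in \{2, \ldots, N\}$. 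By Remark \ref{rem:pumping} we already know $\gamma_q(k') = \gamma_q(k)$, so the task reduces to showing $\gamma_{q^u}(k') = \gamma_q(k')$.

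The key step is a base-conversion bookkeeping argument. Since $a_1' = 0 \equiv 0$ and $a_i' \equiv 1 \pmod u$ for $i \geq 2$, I would group the terms of the base-$q$ expansion of $k'$ according to the value of $\lfloor a_i'/u \rfloor$. Writing $a_i' = g_i u + 1$ for $i \geq 2$ (and handling the $k_1$ term separately, as it sits in the $q^0$ slot), each block contributes a ``digit'' $\sum_{i : g_i = g} k_i q$ at position $g+1$ in base $q^u$, plus the leftover $k_1$ at position $0$. The point is that each such partial sum is strictly less than $q^u$: indeed $\sum_i k_i q \leq (\gamma_q(k) - k_1) q \leq M q < q^2 \leq q^u$ using $\gamma_q(k) \leq M \leq q$ (wait — I should be careful here; the lemma as stated has no hypothesis $\gamma_q(k) \leq q$). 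Let me instead argue directly: each digit $k_i$ satisfies $0 \le k_i < q$, and distinct $i$ with the same $g_i$ occupy distinct powers $q^{a_i' \bmod u}$, all of which are at most $q^{u-1}$ — actually all equal to $q^1$ here since $a_i' \equiv 1$. So within one block at most the positions $q^0$ and $q^1$ (base $q$) are occupied, and the block value is $k_1$ or of the form $(\sum k_i) q$ with the sum of those $k_i$ at most... hmm. The cleanest route: since the $a_i'$ are strictly increasing and all $\equiv 1 \pmod u$ except $a_1' = 0$, no two of them are equal, hence at most one $i \ge 2$ has any given value of $g_i$; each block therefore contains exactly one term $k_i q \le (q-1)q < q^u$. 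Thus the base-$q^u$ digits of $k'$ are exactly $k_1$ (at position $0$) and $k_i$ (at position $g_i + 1$) for $i \ge 2$, with no carries, giving $\gamma_{q^u}(k') = k_1 + \cdots + k_N = \gamma_q(k') = \gamma_q(k)$.

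I expect the main obstacle to be exactly this no-carry verification: one must be sure that when the base-$q$ representation of $k'$ is regrouped into base-$q^u$ blocks, each block value genuinely lies in $[0, q^u)$ so that these block values are literally the base-$q^u$ digits. The hypotheses of Lemma \ref{lma:pump1} (strictly increasing $a_i'$, all congruent to $1$ mod $u$ with $u > 1$) are what guarantee this: distinctness of the $a_i'$ forces at most one nonzero base-$q$ digit per base-$q^u$ block, and a single base-$q$ digit is automatically below $q^u$. Once that is clear, the rest is immediate from $k \mid k'$ (Lemma \ref{lma:pump1}) and $\gamma_q(k') = \gamma_q(k)$ (Remark \ref{rem:pumping}).
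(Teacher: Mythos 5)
There is a genuine error in the final step of your argument, and it traces back to the choice of normalization for the exponents. You import from Lemma \ref{lma:pump1} a $k'$ whose base-$q$ exponents satisfy $a_i' \equiv 1 \pmod{u}$, writing $a_i' = g_i u + 1$. The term $k_i q^{a_i'}$ then equals $k_i q\cdot (q^u)^{g_i}$, so (granting that the $g_i$ are distinct and positive) the base-$q^u$ digit of $k'$ at position $g_i$ is $k_i q$, \emph{not} $k_i$. You in fact compute the block value correctly as ``$k_i q \le (q-1)q < q^u$'' but then assert that the digits are ``exactly $k_1$ and $k_i$,'' which contradicts your own block computation. The true digit sum under your construction is
\[
\gamma_{q^u}(k') = k_1 + q\sum_{i=2}^{N} k_i,
\]
which exceeds $\gamma_q(k)$ whenever $q>1$ and some $k_i\neq 0$ for $i\ge 2$. (There is also a secondary issue: nothing in Lemma \ref{lma:pump1} prevents $a_2'=1$, in which case the position-$0$ block would be $k_1+k_2q$; but this is moot given the main error.)

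The congruence you actually need is $a_i' \equiv 0 \pmod{u}$, so that each term $k_i q^{a_i'}$ sits flush on a base-$q^u$ digit boundary and contributes the digit $k_i$ itself. The paper does exactly this: since $\gcd(u,r)=1$ one can pick $b$ with $rb\equiv -1\pmod{u}$, so that $u \mid rb+1$ and $q^{rb+1}\equiv q \pmod{k}$; replacing each exponent $a_i$ by $(rb+1)a_i$ produces a $k'$ with $k\mid k'$ whose base-$q$ exponents are all multiples of $u$, whence $\gamma_{q^u}(k')=\gamma_q(k)$. The residue-$1$ normalization of Lemma \ref{lma:pump1} is tailored to the substitution $\lambda^{q^{a_i}}=\lambda^q$ in the proof of Theorem \ref{thm:main}; it is the wrong tool for aligning digits across a change of base, and the two lemmas cannot be merged the way you propose.
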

\begin{proof}
    Since $u$ and $r$ are relatively prime there exists some $b\in\mathbb{N}$ such that $rb\equiv -1 \pmod{u}$. By our hypothesis, $q^r \equiv 1\pmod{k}$, it follows that $q^{rb+1}\equiv q\pmod{k}$. Let $k_1+k_2q^{a_2}+\cdots+k_N q^{a_N}$ be the base-$q$ expansion of $k$ and define $k'$ by replacing $q$ with $q^{rb+1}$ in this expansion. Then we have
    \[
        k' = k_1+k_2 q^{(rb+1)a_2}+\cdots+k_N q^{(rb+1)a_N}\equiv k \pmod{k}.
    \]
    However, $u \mid (rb+1)$, so the nonzero digits in the base-$q^u$ expansion of $k'$ are the same as the nonzero digits in the base-$q$ expansion of $k$. Therefore, $\gamma_{q^u}(k^\prime) = \gamma_q(k)$.
\end{proof}

With Lemma \ref{lma:pump2} in place we now proceed to prove Theorem \ref{thm:odd} and Theorem \ref{thm:order}.

\begin{proof}[Proof of Theorem \ref{thm:odd}]
Let $r$ be the order of $p$ in $(\mathbb{Z}/k\mathbb{Z})^\times$ and $c$ be the least nonnegative integer such that $\gamma_p(k) \leq 2^c+1$. In the notation of Lemma \ref{lma:pump2} we let $u = 2^c$, in which case there exists some $k'$, divisible by $k$, satisfying $\gamma_{p^u}(k') = \gamma_{p}(k)$. Since $p$ is odd it has order dividing $2^c$ in $(\mathbb{Z}/2^{c+1}\mathbb{Z})^\times$, and thus $2^{c+1} \mid p^u-1$, so $2^c\mid p^u-1$. Let $M = 2^c+1$ and $n = u$ in Theorem \ref{thm:main} to get $v(p, k') \leq 2^{c+1}+\mathbf{1}_{2^c+1}(\gamma_{p^u}(k'))$. Since $v(p, k) \leq v(p, k')$ and $\gamma_{p^u}(k') = \gamma_{p}(k)$ we have
\[
    v(p, k) \leq 2^{c+1}+\mathbf{1}_{2^c+1}(\gamma_{p}(k)).
\]
For the second inequality note that $2^{c-1} < \gamma_p(k)$, so $2^{c+1} < 4\gamma_p(k)$. Since all quantities are integers it follows that $2^{c+1}+1\leq 4\gamma_p(k)$. However, the left hand side is odd and the right hand side is even, therefore the inequality strictly holds.
\end{proof}

\begin{proof}[Proof of Theorem \ref{thm:order}]
Let $u$ be as in the statement of the theorem. By Lemma \ref{lma:pump2} there exists some $k'$, divisible by $k$, such that $\gamma_{p^u}(k') = \gamma_p(k)\leq p^u$. Let $M = p^u$ and $n=u$ in Theorem \ref{thm:main} to get $v(p, k') \leq 2(p^u-1)+\mathbf{1}_{p^u}(\gamma_{p^u}(k'))$. Since $v(p, k) \leq v(p, k')$ and $\gamma_{p^u}(k') = \gamma_{p}(k)$ the result immediately follows.
\end{proof}

Before beginning the proof of Theorem \ref{thm:asymptotic} we recall the definition and some known bounds on the first Chebyshev function. For any real number $x$ define
\[
\vartheta(x) = \sum_{\substack{\ell \leq x,\\ \ell\text{ is prime}}} \ln(\ell).
\]
Theorem 4 of Rosser and Schoenfeld \cite{barkley1962approximate} establishes the bound
\begin{equation}\label{eq:chebybound}
x\left(1-\frac{1}{2\ln(x)}\right) < \vartheta(x) < x\left(1+\frac{1}{2\ln(x)}\right)
\end{equation}
for all $x \geq 563$. We will use these bounds to find small primes not dividing $r$.
\begin{proof}[Proof of Theorem \ref{thm:asymptotic}]
We first consider the case when $p > 2$. Let $\epsilon > 0$ be arbitrary. As a consequence of Equation \ref{eq:chebybound} we have
\[
x\left(1+\epsilon - \frac{2+\epsilon}{2\ln((2+\epsilon)x)} - \frac{1}{2\ln(x)}\right) < \vartheta((2+\epsilon)x) - \vartheta(x)
\]
for all $x\geq 563$. Since
\[
\lim_{x\rightarrow\infty}\left(\frac{2+\epsilon}{2\ln((2+\epsilon)x)} + \frac{1}{2\ln(x)}\right) = 0
\]
there exists some $N$, depending on $\epsilon$, such that for all $x > N$ the inequality
\[
\vartheta((2+\epsilon)x) - \vartheta(x) > x
\]
holds. If $r$ is large enough, so that $\ln(r) > N$, then
\begin{equation}\label{eq:partialsum}
\sum_{\substack{\ln(r) < \ell \leq (2+\epsilon)\ln(r),\\ \ell\text{ is prime}}} \ln(\ell) > \ln(r).
\end{equation}
Exponentiating both sides of the above inequality gives
\[
\prod_{\substack{\ln(r) < \ell \leq (2+\epsilon)\ln(r),\\ \ell\text{ is prime}}} \ell > r.
\]
Therefore, $r$ cannot be divisible by all primes between $\ln(r)$ and $(2+\epsilon)\ln(r)$. Let $\ell$ be a prime in this range such that $\ell \nmid r$. By construction $p^\ell \geq p^{\ln(r)} = r^{\ln(p)}$. Since $\ln(p) > 1$, for $r$ sufficiently large we have $r^{\ln(p)} > r(p-1) = \gamma_p(p^r-1)$. Apply Lemma \ref{lma:pump2} with $u = \ell$ to get some $k'$ satisfying $p^r-1 | k'$ and $\gamma_{p^\ell}(k') = r(p-1)$. Consequently, we have $\gamma_{p^\ell}(k') \leq p^\ell$. In the notation of Theorem \ref{thm:main}, let $M = p^\ell$ and $n=\ell$ to get
\[
v(p, k') \leq 2(p^\ell-1)+\mathbf{1}_{p^\ell}(\gamma_{p^\ell}(k')) < 2p^\ell.
\]

On the other hand $\ell \leq (2+\epsilon)\ln(r)$, therefore, $p^\ell \leq r^{(2+\epsilon)\ln(p)}$. Since $p^r-1\mid k'$ it follows that
\[
v(p, p^r-1) \leq v(p, k') < 2r^{(2+\epsilon)\ln(p)}.
\]
In particular, $v(p, p^r-1) = \mathcal{O}(r^{(2+\epsilon)\ln(p)})$ as a function of $r$.

In the case that $p=2$, the above argument only breaks down because
\[
r^{\ln(2)} < r = \gamma_2(2^r-1).
\]
To remedy this we use the inequality
\[
\sum_{\substack{\log_2(r) < \ell \leq (2+\epsilon)\log_2(r),\\ \ell\text{ is prime}}} \ln(\ell) > \log_2(r)
\]
instead of the one presented in Equation \ref{eq:partialsum}. The rest of the proof is the same with the exception of the bound on $2^\ell$ which becomes
\[
2^{(2+\epsilon)\log_2(r)} = r^{2+\epsilon}.
\]
This in turn gives $v(2, 2^r-1) < 2r^{2+\epsilon}$ for all sufficiently large $r$.
\end{proof}

\bibliographystyle{alpha}
\bibliography{bibliography}
\end{document}